\documentclass[a4paper, 11pt, reqno]{article}

\usepackage{cite}
\usepackage{tikz}\usetikzlibrary{matrix,decorations.pathreplacing,positioning}
\usepackage{amsmath,amsthm,amssymb,mathabx,bm,relsize,wasysym}
\usepackage{enumitem}

\usepackage{calrsfs}

\usepackage{setspace}
\setstretch{1.00}

\usepackage{xcolor}
\definecolor{light-gray}{gray}{0.95}

\usepackage{hyperref}

\usepackage{tabularx}

\usepackage[margin=3cm]{geometry}

\definecolor{myg}{RGB}{220,220,220}
 

\theoremstyle{definition}
\newtheorem{theorem}{Theorem}[section]

\newtheorem{lemma}[theorem]{Lemma}

\newtheorem{example}[theorem]{Example}

\newtheorem{remark}[theorem]{Remark}

\newcommand*{\myproofname}{Proof of the claim}





\newcommand{\Mod}[1]{\ (\mathrm{mod}\ #1)}

\newlength{\mynodespace}
\setlength{\mynodespace}{7.5em}

\title{\textbf{A New Series for Rogers-Ramanujan-Gordon Identities when $\mathbf{k=3}$}}

\usepackage{authblk}

\author{Yal\c{c}\i n Can K\i l\i\c{c} \\ Sabanc{\i} University, T\"urkiye\thanks{This work is partially supported by TUBITAK through grant 122F136. \\ Email: yalcinkilic@sabanciuniv.edu}}

\date{}
\begin{document}

\maketitle

\begin{abstract}
    In this paper, we introduce a new series of Rogers-Ramanujan-Gordon partitions when $k=3$. The combinatorial interpretation of the series is given by base partition, forward moves and backward moves. We conclude the paper with future research questions related to the generalization of this approach.
\end{abstract}

\section{Introduction} \label{sec:int}

The systematic study of \textit{partitions} started with Euler in \cite{euler2012introduction}.
An \emph{integer partition} of a natural number $n$ is a sequence of positive integers $(\lambda_1,\lambda_2,..,\lambda_k)$ where~$\lambda_1 \ge \lambda_2 \ge \cdots \ge \lambda_k \ge 1$ and $\lambda_1+\lambda_2+ \cdots+\lambda_k = n$.
We, usually, denote this partition as $\lambda_1+\lambda_2+\ldots+\lambda_k$. Each $\lambda_i$ is called a \emph{part}. We define $p(n)$ as the number of partitions of  $n$, and $p(n)$ is called the \emph{partition function}. For instance, all partitions of $4$ can be listed as: $4$, $3+1$, $2+2$, $2+1+1$, $1+1+1+1$.
Thus, $p(4)=5$.

Instead of looking at all possible partitions of a number, we can put some restrictions on the partition. If we impose conditions on the partitions, we denote the number of partitions of $n$ which satisfy the conditions as $p(n \, | \, \text{conditions})$. 
As an example, we can list all partitions of $5$ where each part is odd: $5$, $3+1+1$ and $1+1+1+1+1$. We can denote this as: $p(5 \, | \, \text{each part is odd}) = 3$.
Similarly, all partitions of $5$ where each part is distinct, i.e., we are not allowed to use repeated parts can be listed as: $5$, $4+1$ and $3+2$. As a result, $p(5 \, | \, \text{parts do not repeat}) = 3$. It is not a coincidence that~$p(5 \, | \, \text{odd parts}) = p(5 \, | \, \text{distinct parts})$. It follows from the following theorem of Euler:

\begin{theorem}[Euler's Theorem, \cite{andrews1998theory}]
Let $n$ be any natural number. Then, the number of partitions of $n$ where each part is odd is equal to the number of the partitions of $n$ where each part appears exactly once. In other words, $p(n \, | \, \text{each part is odd}) = p(n \, | \, \text{parts do not repeat})$.    
\end{theorem}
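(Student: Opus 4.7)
The plan is to prove Euler's theorem via generating functions, which is the cleanest route and fits the style of later sections on Rogers--Ramanujan--Gordon identities. I would introduce the formal variable $q$ and encode each class of partitions as an infinite product: the number of partitions of $n$ into distinct parts is the coefficient of $q^n$ in $\prod_{n\ge 1}(1+q^n)$, since for each positive integer $n$ we independently decide whether or not to use it as a part. Similarly, the number of partitions of $n$ into odd parts is the coefficient of $q^n$ in $\prod_{n\ge 1}\frac{1}{1-q^{2n-1}}$, because each odd part $2n-1$ can appear any nonnegative number of times, contributing the geometric series $\sum_{j\ge 0} q^{j(2n-1)}$.

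The key algebraic step is then to show that these two infinite products are equal as formal power series. I would rewrite each factor of the first product using the identity
\[
1+q^n=\frac{1-q^{2n}}{1-q^n},
\]
so that
\[
\prod_{n\ge 1}(1+q^n)=\prod_{n\ge 1}\frac{1-q^{2n}}{1-q^n}=\frac{\prod_{n\ge 1}(1-q^{2n})}{\prod_{n\ge 1}(1-q^n)}.
\]
Splitting the denominator into its even- and odd-indexed subproducts and cancelling the even factors with the numerator would leave exactly $\prod_{n\ge 1}\frac{1}{1-q^{2n-1}}$, which is what we want.

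Comparing coefficients of $q^n$ on both sides then delivers the identity $p(n\mid\text{distinct parts})=p(n\mid\text{odd parts})$. The only subtle point, and the one I would state carefully rather than gloss over, is the rearrangement of infinite products: since we are working in the formal power series ring $\Z[[q]]$, each coefficient of $q^n$ depends on only finitely many factors (those with index at most $n$), so the cancellation of even factors is legitimate and no convergence issue arises. An alternative approach would be Glaisher's bijection, merging equal pairs of odd parts repeatedly to produce distinct parts, and conversely writing each distinct part as $2^a\cdot m$ with $m$ odd and replacing it by $2^a$ copies of $m$; I would mention this as a combinatorial counterpart but would not carry out the full bijective verification, since the generating-function proof is shorter and sufficient.
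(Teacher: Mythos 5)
Your proof is correct. Note, however, that the paper does not actually prove this statement: Euler's theorem is quoted as background with a citation to Andrews's \emph{The Theory of Partitions}, so there is no in-paper argument to compare against. Your generating-function derivation is the standard one and is complete --- the product $\prod_{k\ge 1}(1+q^k)$ for distinct parts, the product $\prod_{k\ge 1}(1-q^{2k-1})^{-1}$ for odd parts, the factor identity $1+q^k=(1-q^{2k})/(1-q^k)$, and the cancellation of the even-indexed factors, with the correct remark that the rearrangement is licensed in $\Z[[q]]$ because each coefficient involves only finitely many factors. The only cosmetic issue is the reuse of $n$ both as the partitioned integer and as the product index; rename one of them.
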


Rogers and Ramanujan found two partition identities which are very influential in the partition theory:

\label{thm:RR1}
\begin{theorem}[Rogers-Ramanujan 1, \cite{andrews1998theory}]

Let $n$ be any natural number. Then, $$p(n \, | \,  \text{parts} \equiv \pm1 \Mod{5}) = p(n \, | \, \text{Repeated parts and consecutive parts are not allowed}).$$
\end{theorem}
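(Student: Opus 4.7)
The plan is to recognize both sides of the claimed identity as coefficients of the same formal power series in $q$, reducing the problem to a classical $q$-series identity.

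First, I would write down the generating functions. Partitions whose parts all lie in the residue classes $\pm 1 \Mod{5}$ admit arbitrary multiplicities on each allowed part, so they are generated by $\prod_{n \geq 0}(1-q^{5n+1})^{-1}(1-q^{5n+4})^{-1}$. For partitions with no repeated and no consecutive parts, I would use the standard bijection that subtracts the staircase $(2k-1, 2k-3, \ldots, 3, 1)$ from a partition $\lambda_1 > \lambda_2 > \cdots > \lambda_k$ satisfying $\lambda_i - \lambda_{i+1} \geq 2$ and $\lambda_k \geq 1$; this produces an ordinary partition with at most $k$ parts and weight reduced by $k^2$, yielding the generating function $\sum_{k \geq 0} q^{k^2}/(q;q)_k$.

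The main step, and the real obstacle, is the resulting analytic identity
\[
\sum_{k \geq 0} \frac{q^{k^2}}{(q;q)_k} = \prod_{n \geq 0} \frac{1}{(1-q^{5n+1})(1-q^{5n+4})}.
\]
I would follow Schur's approach: let $D_m(q)$ denote the generating function for partitions into distinct non-consecutive parts, each at most $m$. Conditioning on whether the part $m$ appears yields the linear recurrence $D_m(q) = D_{m-1}(q) + q^m D_{m-2}(q)$, and iterating it from the base cases $D_{-1} = D_0 = 1$ recovers the sum side in the limit $m \to \infty$. To match the product side, I would reshape the limit into a bilateral form $(q;q)_\infty^{-1} \sum_{j \in \Z} (-1)^j q^{j(5j-1)/2}$ and then apply the Jacobi triple product identity to recognize the bilateral sum as $(q^2;q^5)_\infty (q^3;q^5)_\infty (q^5;q^5)_\infty$; cancellation against $(q;q)_\infty$ leaves precisely the infinite product above.

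The delicate point is the identification of the iterated recurrence with the bilateral theta series, where essentially all of the combinatorial depth of the Rogers-Ramanujan identity resides; an alternative route for this step is Watson's transformation of a very-well-poised ${}_{8}\phi_{7}$ series, specialized so that all but two parameters tend to infinity. Every other ingredient -- the two generating function calculations, the staircase bijection, and the final triple-product identification -- is essentially mechanical once that analytical core is in hand.
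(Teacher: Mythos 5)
The paper does not prove this statement at all: it is quoted as a classical theorem with a citation to Andrews's book, so there is no internal proof to compare against. Judged on its own, your outline follows the standard route (Schur's proof): the two generating-function computations and the staircase bijection giving $\sum_{k\ge 0} q^{k^2}/(q;q)_k$ are correct and genuinely mechanical, and the reduction of the theorem to the analytic identity
\[
\sum_{k \geq 0} \frac{q^{k^2}}{(q;q)_k} \;=\; \prod_{n \geq 0} \frac{1}{(1-q^{5n+1})(1-q^{5n+4})}
\]
is sound. The Jacobi-triple-product bookkeeping at the end also checks out: with $q\mapsto q^5$ and $z=q^2$ the bilateral sum $\sum_{j\in\Z}(-1)^j q^{j(5j-1)/2}$ becomes $(q^2;q^5)_\infty(q^3;q^5)_\infty(q^5;q^5)_\infty$, and dividing by $(q;q)_\infty$ leaves exactly the stated product.

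The one genuine gap is the step you yourself flag: passing from the recurrence $D_m = D_{m-1} + q^m D_{m-2}$ to the bilateral form $(q;q)_\infty^{-1}\sum_{j\in\Z}(-1)^j q^{j(5j-1)/2}$. As written this is an assertion, not an argument --- ``reshape the limit into a bilateral form'' is precisely the content of Schur's finite Rogers--Ramanujan polynomial identities (one exhibits a second explicit polynomial family satisfying the same recurrence and initial conditions, then takes $m\to\infty$), and essentially all of the difficulty of the theorem lives there. Naming Watson's ${}_{8}\phi_{7}$ transformation as an alternative is fine, but again only names the hard step. So your proposal is a correct and well-organized roadmap whose central pillar is cited rather than built; to make it a proof you would need to actually carry out the Schur polynomial argument (or the Watson specialization) rather than gesture at it.
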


\label{thm:RR2}
\begin{theorem}[Rogers-Ramanujan 2, \cite{andrews1998theory}]

Let $n$ be any natural number. Then, $p(n \, | \, \text{parts}$ ~$\equiv \pm 2  \pmod 5) = p(n \, | \,$ Repeated parts and consecutive parts are not allowed, 1 cannot be used as a part.$)$
\end{theorem}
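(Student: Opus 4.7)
The plan is to translate both counts into formal power series and reduce the statement to the classical second Rogers-Ramanujan $q$-series identity. On the arithmetic side, the restriction that every part lie in $\{5m-2,\, 5m-3 : m \geq 1\}$ with unrestricted multiplicity immediately yields
\[
\sum_{n \geq 0} p\bigl(n \,\bigl|\, \text{parts} \equiv \pm 2 \Mod{5}\bigr)\, q^n \;=\; \prod_{m \geq 1} \frac{1}{(1-q^{5m-2})(1-q^{5m-3})}.
\]

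For the combinatorial side I would introduce a base-partition shift. A partition with exactly $k$ parts $\lambda_1 > \lambda_2 > \cdots > \lambda_k$ satisfying $\lambda_i - \lambda_{i+1} \geq 2$ and $\lambda_k \geq 2$ can be uniquely written as $\lambda_i = 2(k-i+1) + \mu_i$, and the hypotheses translate exactly into $\mu_1 \geq \mu_2 \geq \cdots \geq \mu_k \geq 0$. Thus $\mu$ ranges over arbitrary partitions into at most $k$ parts, contributing the factor $1/(q;q)_k$, while the base partition $2+4+\cdots+2k$ contributes $q^{k(k+1)}$. Summing over $k$ gives
\[
\sum_{n \geq 0} p\bigl(n \,\bigl|\, \text{no repeats, no consecutive parts, } 1 \text{ forbidden}\bigr)\, q^n \;=\; \sum_{k \geq 0} \frac{q^{k(k+1)}}{(q;q)_k}.
\]

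The theorem therefore reduces to the analytic identity
\[
\sum_{k \geq 0} \frac{q^{k(k+1)}}{(q;q)_k} \;=\; \prod_{m \geq 1} \frac{1}{(1-q^{5m-2})(1-q^{5m-3})},
\]
which is the second Rogers-Ramanujan identity and is the heart of the proof. My preferred route is Watson's $q$-analog of Whipple's transformation: specialising a terminating balanced ${}_8\phi_7$ and passing to a suitable limit collapses the left-hand side to a theta series, after which Jacobi's triple product identity factors it into the prescribed infinite product.

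The main obstacle is precisely this last step. No truly short proof of the Rogers-Ramanujan identity is known: every complete argument either invokes the Bailey-Watson machinery of $q$-hypergeometric series, or passes through the Rogers-Ramanujan continued fraction, or constructs an intricate explicit bijection in the spirit of Garsia-Milne. The first two reductions above are elementary bookkeeping; the content of the theorem is compressed entirely into this nontrivial $q$-series identity, and a complete treatment would cite one of these established routes rather than reproduce it in full.
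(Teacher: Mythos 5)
The paper states this theorem purely as classical background, cited to Andrews's book, and gives no proof of its own, so there is no internal argument to measure yours against. Your two reductions are correct and are the standard ones: the product generating function for partitions into parts $\equiv \pm 2 \pmod 5$ is immediate, and the staircase substitution $\lambda_i = 2(k-i+1) + \mu_i$ correctly converts the conditions ``distinct parts, gaps at least $2$, smallest part at least $2$'' into an arbitrary partition $\mu$ into at most $k$ parts, giving $\sum_{k \ge 0} q^{k(k+1)}/(q;q)_k$ with the base staircase contributing $q^{k(k+1)}$. (This is, incidentally, the same base-partition-plus-moves philosophy the paper itself uses for its $k=3$ results, so your reduction is very much in the spirit of the surrounding text.) What remains is the second Rogers--Ramanujan identity, which you correctly identify as the irreducible analytic core and propose to close via Watson's $q$-Whipple transformation and the Jacobi triple product; that is a legitimate and standard route, and deferring it to the literature is no less complete than the paper's own treatment, which is citation alone. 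The only caveat is that your text is therefore a proof outline rather than a proof: the entire content of the theorem is concentrated in the one identity you do not establish, so if a self-contained argument were required, essentially all of the work would still lie ahead.
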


Let $n = 9$. Then, 
at the modulus condition side of Rogers-Ramanujan 1 we have $9$, $6 + 1 + 1 + 1$, $4 + 4 + 1$, $4 + 1 + 1 + 1 + 1 + 1$, $1 + 1 + 1 + 1 + 1 + 1 + 1 + 1 + 1$. At the difference condition side of Rogers-Ramanujan 1 we have $9$, $8 + 1$ , $7 + 2$, $6 + 3$, $5 + 3 + 1$.
Similarly, at the modulus condition side of Rogers-Ramanujan 2 we have $7+2$, $3 + 3 + 3$, $3 + 2 + 2 + 2$. At the difference condition side of Rogers-Ramanujan 2 we have $9$ , $7 + 2$ , $6+3$.

These type of identities are generalized as follows: Let $n$ be any natural number. Then identities of the type $$p(n \, | \, \text{modulus conditions on parts}) = p(n \, | \, \text{difference conditions on parts}),$$ are called \emph{Rogers-Ramanujan type identities}. Gordon generalized these identities in \cite{GORDON61} as follows:

\begin{theorem}[Rogers-Ramanujan-Gordon Identities]
\label{RRG}
Let $a$ and $k$ be natural numbers such that $1 \le a \le k$. Then, the number of partitions of $n$ into parts not equivalent to~$0$,~$\pm a \pmod {2k+1}$  is equal to the number of partitions of $n = \lambda_1 + \lambda_2 + ... + \lambda_k$ where~$\lambda_i \ge \lambda_{i+k-1} + 2$ and the number of 1's are at most $a-1$. 
\end{theorem}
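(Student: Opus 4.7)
The plan is to establish the equality at the level of generating functions. Write $B_{k,a}(n)$ for the number of partitions of $n$ counted on the modular side and $A_{k,a}(n)$ for those counted on the gap side. Euler's factorization immediately gives
$$\sum_{n\ge0} B_{k,a}(n)\, q^n \;=\; \frac{(q^a,\, q^{2k+1-a},\, q^{2k+1};\, q^{2k+1})_\infty}{(q;q)_\infty},$$
and a single application of the Jacobi triple product identity rewrites the numerator as $\sum_{j\in\Z}(-1)^j q^{j((2k+1)j+2k+1-2a)/2}$. So the left-hand product side is completely understood in closed form.

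For the gap side I would introduce the bivariate refinement
$$J_{k,a}(x;q) \;=\; \sum_{m,n\ge 0} d_{k,a}(m,n)\, x^m q^n,$$
where $d_{k,a}(m,n)$ counts partitions of $n$ into exactly $m$ parts satisfying $\lambda_i\ge\lambda_{i+k-1}+2$ with at most $a-1$ parts equal to $1$. The heart of the proof is then the functional recurrence
$$J_{k,a}(x;q) - J_{k,a-1}(x;q) \;=\; (xq)^{a-1}\, J_{k,k-a+1}(xq;q), \qquad 2\le a\le k,$$
together with the boundary data $J_{k,0}(x;q)=0$ and $J_{k,a}(0;q)=1$. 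The combinatorial step is reasonably direct: the difference on the left enumerates partitions with \emph{exactly} $a-1$ ones, and subtracting $1$ from every part and dropping the resulting zeros is a bijection onto partitions enjoying the same gap condition whose number of ones (equal to the number of twos in the original partition) is at most $k-a$; the part-count reduction and weight shift produce the factor $(xq)^{a-1}$ and the substitution $x\mapsto xq$ respectively.

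The last step is to verify that the product side, after inserting the refinement variable $x$ through the Jacobi triple product (producing an expression of the form $(xq;q)_\infty^{-1}\sum_{j\in\Z}(-1)^j x^{kj} q^{\cdots}$), satisfies precisely the same recurrence with the same initial conditions; uniqueness as a formal power series in $q$ then forces equality of the two, and setting $x=1$ and extracting the coefficient of $q^n$ finishes the proof. I expect this final verification to be the main obstacle: the asymmetric index shift $a\mapsto k-a+1$ on the gap side does not emerge transparently from the triple product identity and must be extracted by a careful two-term manipulation of the $q$-series, effectively separating the contributions of the Jacobi sum corresponding to adjacent values of $a$ and collecting the $x^{kj}$-weighted pieces so that the exponent of $q$ regroups into the form demanded by $J_{k,k-a+1}(xq;q)$.
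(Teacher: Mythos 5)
The paper does not actually prove this theorem --- it is stated as background and attributed to Gordon \cite{GORDON61} --- so there is no internal proof to compare against; your sketch must stand on its own. What you have written is, in essence, Andrews's analytic proof of Gordon's theorem (his 1974 generalization of the Rogers--Ramanujan identities, reproduced in Chapter 7 of \emph{The Theory of Partitions}), and the parts you work out are correct: the product form of the modular side, its Jacobi-triple-product expansion, and above all the combinatorial functional equation
\[
J_{k,a}(x;q)-J_{k,a-1}(x;q)=(xq)^{a-1}J_{k,k-a+1}(xq;q)
\]
are all right. Your justification of the recurrence is sound: the gap condition $\lambda_i\ge\lambda_{i+k-1}+2$ is equivalent to saying that for each $c$ the parts lying in $\{c,c+1\}$ number at most $k-1$, so a partition with exactly $a-1$ ones has at most $k-a$ twos, and the ``subtract $1$ from every part and delete the zeros'' map lands bijectively in the class with at most $k-a$ ones, accounting for the factor $(xq)^{a-1}$ and the shift $x\mapsto xq$ exactly as you say. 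The one substantive piece you leave unexecuted --- and correctly identify as the main obstacle --- is exhibiting an explicit $x$-refinement of the product side and verifying that it satisfies the same recurrence with the same initial data; this is genuinely nontrivial (it is the content of Andrews's $H_{k,i}$ and $J_{k,i}$ lemmas) but it is known to go through, so your plan is viable rather than flawed. You should also make the uniqueness step explicit: the recurrence together with $J_{k,0}=0$ and $J_{k,a}(0;q)=1$ determines the coefficients by induction because the right-hand side raises the minimal $q$-degree in each $x$-power. Note finally that this route is quite different in spirit from the rest of the paper, which works with base partitions and forward/backward moves; a proof of Gordon's theorem in that combinatorial style would be closer to the paper's methods, but your generating-function route is the standard and shorter one.
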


Note that $(a,k)=(2,2)$ corresponds to Rogers-Ramanujan 1, and $(a,k)=(1,2)$ corresponds to Rogers-Ramanujan 2.

Let $(a,k)=(2,3)$ and $n=9$. Then, the modulus condition side of Theorem \ref{RRG} gives us: $8+1$, $6+3$, $6+1+1+1$, $4+4+1$ , $4+3+1+1$, $4+1+1+1+1+1$, $3+3+3$, $3+3+1+1+1$, $3+1+1+1+1+1+1$, $1+1+1+1+1+1+1+1+1$. At the difference condition side we have  $9$, $8+1$, $7+2$, $6+3$, $6+2+1$, $5+4$, $5+3+1$, $5+2+2$, $4+4+1$, $4+3+2$.

We will use the notation $rrg_{k,a}(m,n)$ to denote the number of partition of $n$ into $m$ parts which satisfy the modulus condition side of \ref{RRG}. If the numbers $m$, $n$ and $a$ are not taken into account, then we use $rrg_{k}$ to denote the partitions which satisfies the difference condition side of \ref{RRG}.

Andrews found an analytic version of Rogers-Ramanujan-Gordon identities:

\begin{theorem}[Andrews's Analytic Version of Rogers-Ramanujan-Gordon Theorem, \cite{andrews1974analytic}]
    Let $1 \le a \le k$ be integers. Then, $$\sum_{n_1,n_2\ldots,n_{k-1} \ge 0} \frac{q^{N_1^2+N_2^2+\ldots+N_{k-1}^2+N_a+N_{a+1}+\ldots+N_{k-1}}}{(q;q)_{n_1}(q;q)_{n_2}...(q;q)_{n_{k-1}}} = \prod_{\substack{n=1\\ n \nequiv 0, \, \pm a \, \Mod{2k+1}}} ^{\infty}\frac{1}{1-q^n}$$
    where $N_i := n_i+n_{i+1}+...+n_{k-1}$.
\end{theorem}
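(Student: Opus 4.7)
The plan is to establish the identity by showing that both sides equal the generating function for partitions satisfying the difference conditions of Theorem \ref{RRG}. On the right, Euler's product formula $\prod_{n\in S}(1-q^n)^{-1}=\sum_m p_S(m)q^m$ identifies the product $\prod_{n\not\equiv 0,\pm a\pmod{2k+1}}(1-q^n)^{-1}$ with the generating function for partitions whose parts avoid the residue classes $0,\pm a\pmod{2k+1}$. Theorem \ref{RRG} then identifies this with the generating function for partitions satisfying $\lambda_j\geq \lambda_{j+k-1}+2$ and containing at most $a-1$ parts equal to $1$. So the task reduces to showing that the left-hand side has this same combinatorial meaning.

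First I would set up a combinatorial interpretation of each summand. For a fixed index $(n_1,\ldots,n_{k-1})\in\N^{k-1}$ I would construct a \emph{base partition} $\beta(n_1,\ldots,n_{k-1})$ satisfying the difference conditions, whose total weight is exactly $N_1^2+N_2^2+\cdots+N_{k-1}^2+N_a+N_{a+1}+\cdots+N_{k-1}$. The natural construction is a staircase in the spirit of Gordon's marking: $\beta$ has $N_1$ parts, and $n_i$ of them carry ``mark $i$''. Parts in the same block are spaced by $2$, which accounts for the quadratic weight $\sum N_i^2$; the linear correction $N_a+\cdots+N_{k-1}$ reflects the ``at most $a-1$ ones'' restriction, which forces a uniform upward shift by $1$ of the bands indexed by $i\geq a$.

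Next I would interpret $\prod_i (q;q)_{n_i}^{-1}$ as the generating function for admissible upward displacements of each band. Writing $(q;q)_{n_i}^{-1}=\sum_m p(m\mid \text{at most }n_i\text{ parts})\, q^m$, each auxiliary partition prescribes an admissible shift of the $n_i$ marked parts of $\beta$. The key geometric fact to verify is that band-by-band shifts preserve both the gap condition $\lambda_j\geq\lambda_{j+k-1}+2$ and the bound on the multiplicity of $1$, and that every partition in the difference class is produced exactly once this way. Once this bijection is in place, summing over all tuples $(n_1,\ldots,n_{k-1})$ exhausts the difference class, and the identity follows.

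The main obstacle will be verifying the bijection in the previous paragraph: both the canonical assignment of marks to an arbitrary difference-condition partition and the injectivity/surjectivity of the base-plus-shift decomposition require careful case analysis, particularly at the boundary between bands and at the ``kink'' associated to the index $a$ in the linear exponent. A parallel analytic route, following Andrews in \cite{andrews1974analytic}, is to refine the left-hand side to a two-variable function $J_{k,a}(x;q)$ (with $x$ tracking the number of parts), derive a $q$-difference equation for $J_{k,a}$ from standard Pochhammer manipulations, and match that recursion against a companion recursion for the refined product side; uniqueness of the solution would then close the argument.
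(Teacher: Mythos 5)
The paper does not actually prove this theorem: it is quoted as known background from \cite{andrews1974analytic}, so there is no internal proof to compare your attempt against. Judged on its own terms, your proposal is a roadmap rather than a proof. The reduction of the product side via Euler's formula and Theorem \ref{RRG} is correct, and the combinatorial strategy you sketch for the sum side --- a base partition of weight $N_1^2+\cdots+N_{k-1}^2+N_a+\cdots+N_{k-1}$ determined by the Gordon marking, with each factor $(q;q)_{n_i}^{-1}$ recording admissible forward moves of the $n_i$ parts of mark $i$ --- is a genuinely viable route; it is essentially the Bressoud--Kur\c{s}ung\"oz argument, and it is the same base-partition-plus-moves philosophy that this paper itself deploys (with a different splitting of parts into singletons and pairs) to prove Theorem \ref{thm:main} in the case $k=3$.

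The gap is that everything which makes this route work is deferred. You never define the moves, never verify that a move cannot violate the gap condition $\lambda_j\ge\lambda_{j+k-1}+2$ or the bound on the multiplicity of $1$ when bands of different marks collide, and never establish that every difference-condition partition decomposes \emph{uniquely} into a base partition plus a $(k-1)$-tuple of auxiliary partitions; you yourself flag this bijection as ``the main obstacle,'' which is an admission that the core of the argument is absent. (For comparison, the present paper's proof of Theorem \ref{thm:main} spends all of its effort precisely on this step: explicit case analysis of forward and backward moves and the verification, via Lemma \ref{lem:shape}, that the base partition is the unique minimal configuration.) The analytic fallback you mention --- Andrews's two-variable functions $J_{k,a}(x;q)$ and matching $q$-difference equations --- is likewise only named: you would still need to derive the recursion for the sum side, the companion recursion for the refined product, and the uniqueness of the solution. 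As written, the proposal is a well-informed plan but not a proof.
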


This identity is called \textit{Andrews-Gordon identity}. When $k=3$, Andrews's series becomes the following:
$$\sum_{m,n \ge 0} \frac{q^{(m+n)^2+n^2}}{(q;q)_m(q;q)_n}$$

Our aim is the present a new series for Rogers-Ramanujan-Gordon identity for the case~$k=3$. We denote the multivariate generating function of $rrg_{3,a}$ by $T_a(x)$. More precisely,

$$T_a(x):=\sum_{m,n \ge 0} rrg_{3,a}(m,n)x^mq^n$$

The main results of the paper are put together in the next theorem. We will give combinatorial explanations of them, as well. The combinatorial interpretation of the given series is done by series of moves, following the approaches in \cite{bressoud1980analytic}, \cite{kurcsungoz2010parity} and \cite{kurcsungoz2019andrews}. Moreover, it is possible to see our series from linked partition ideals perspective as in \cite{andrews1974general}, \cite{chern2019linked} and \cite{chern2020linked}. However, the latter approach is not studied in this paper.
\begin{theorem}
\label{thm:yck}

$$T_1(x) = \sum_{n,m \ge 0} \frac{q^{4\binom{m+1}{2}+2mn+\binom{n+1}{2}+n}x^{2m+n}}{(q^2;q^2)_m(q;q)_n}.$$

$$T_2(x) = \sum_{n,m \ge 0} \frac{q^{4\binom{m+1}{2}+2mn+\binom{n+1}{2}}x^{2m+n}}{(q^2;q^2)_m(q;q)_n}.$$

$$T_3(x) = \sum_{n,m \ge 0} \frac{q^{4\binom{m+1}{2}+2mn+\binom{n+1}{2}-2m}x^{2m+n}}{(q^2;q^2)_m(q;q)_n}.$$

\end{theorem}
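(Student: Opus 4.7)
I would follow the base-partition-plus-moves paradigm of \cite{bressoud1980analytic,kurcsungoz2010parity,kurcsungoz2019andrews}: for each $a\in\{1,2,3\}$ and each pair $(m,n)$, identify a canonical smallest admissible partition $\beta_{a,m,n}$ on the difference side of Rogers--Ramanujan--Gordon with exactly $m$ doubled values and $n$ singleton values, and define $m+n$ forward moves whose applications to $\beta_{a,m,n}$ produce every admissible partition with these multiplicities exactly once. Taking generating functions would then yield the three claimed series.

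\textbf{Step 1: structure and base partitions.} Because $\lambda_i\ge\lambda_{i+2}+2$ forbids any value from appearing three times, every partition counted by $rrg_{3,a}$ decomposes uniquely into $m$ doubled values and $n$ singleton values, for a total of $2m+n$ parts, matching the exponent of $x$ in each summand. For each $a$ I would identify the smallest admissible such partition. For $a=3$ (at most two $1$'s allowed), place the doubles at the bottom as $(1,1),(3,3),\dots,(2m-1,2m-1)$ and the singles above as $2m+1,2m+2,\dots,2m+n$; its weight is $2m^2+2mn+\binom{n+1}{2}=4\binom{m+1}{2}+2mn+\binom{n+1}{2}-2m$. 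For $a=2$ (at most one $1$), placing the singles $1,2,\dots,n$ at the bottom and doubles $(n+2,n+2),\dots,(n+2m,n+2m)$ above gives weight $4\binom{m+1}{2}+2mn+\binom{n+1}{2}$. For $a=1$ (no $1$'s), placing the doubles $(2,2),(4,4),\dots,(2m,2m)$ at the bottom and singles $2m+2,\dots,2m+n+1$ above gives weight $4\binom{m+1}{2}+2mn+\binom{n+1}{2}+n$. These weights exactly match the $q$-exponents in $T_3$, $T_2$, $T_1$ respectively; minimality can be established by a short case analysis comparing top/bottom placements of doubles versus singles.

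\textbf{Step 2: moves.} For each $i\in\{1,\dots,m\}$ I would introduce a forward $i$-th \emph{double-move} that propagates through the partition so as to preserve admissibility (difference condition and bound on the number of $1$'s) and the counts $(m,n)$ while adding exactly $2i$ to the total weight. Analogously, for each $j\in\{1,\dots,n\}$ a forward $j$-th \emph{single-move} adds $j$ to the weight. Each move has a well-defined backward inverse. Counting independent applications of the $m+n$ forward moves contributes the factor $\prod_{i=1}^m(1-q^{2i})^{-1}\prod_{j=1}^n(1-q^j)^{-1}$, which combined with $q^{|\beta_{a,m,n}|}x^{2m+n}$ and summed over $(m,n)\ge 0$ reproduces the claimed expression for $T_a(x)$.

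\textbf{Main obstacle.} The crux is designing the moves so that they genuinely parameterize \emph{all} admissible partitions and no others, i.e.\ the map $(\beta_{a,m,n},\textnormal{move-counts})\mapsto\lambda$ is a bijection. A naive definition that simply increments the top few doubles or singletons by $1$ fails, since such an increment can push adjacent parts too close and violate $\lambda_i\ge\lambda_{i+2}+2$; the correct moves must be \emph{cascading}, simultaneously shifting neighbouring doubles and singles so that admissibility is automatic while the net weight increment remains exactly $2i$ (resp.\ $j$). The hardest part will be verifying surjectivity -- that every admissible partition can be reduced to $\beta_{a,m,n}$ by a unique sequence of backward moves -- through a careful analysis of how doubles and singletons can interleave in a general partition. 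I expect the case $a=2$ to require the most care, since its base has singletons \emph{below} doubles and a generic admissible partition with $m$ doubles and $n$ singletons need not have this form.
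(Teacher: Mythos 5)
Your strategy is the same as the paper's: decompose each $rrg_{3,a}$ partition into pairs and singletons, identify the minimal ``base partition'' with $m$ pairs and $n$ singletons, and parameterize all admissible partitions by forward moves recorded in auxiliary partitions generating $(q^2;q^2)_m^{-1}(q;q)_n^{-1}$. Your Step 1 is correct and complete: the base partitions and their weights agree with the paper's for $a=3$ and $a=2$, and for $a=1$ your version (doubles $[2,2],\dots,[2m,2m]$ with singletons $2m+2,\dots,2m+n+1$) is in fact the right one --- the paper's own remark places the smallest singleton at $2m+1$, which both violates $\lambda_i\ge\lambda_{i+2}+2$ against the adjacent pair $[2m,2m]$ and fails to produce the $+n$ in the exponent of $T_1$, so you have silently corrected a slip in the source.

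The genuine gap is Step 2. You assert that the moves ``must be cascading'' and that the crux is bijectivity, but you never define the moves, and without them there is no proof: everything beyond the weight bookkeeping lives in that definition and in the verification that forward and backward moves are mutually inverse. Concretely, the paper defines the forward move on a pair $[b,b]$ blocked by singletons $(b+2),(b+3),\dots,(b+s)$ as the cascade sending this whole block to $[b+s,b+s]+(b+s-2)+\cdots+(b+1)+(b)$ (still a net weight increase of exactly $2$), with the backward move as the literal inverse; and it fixes an order of operations (for $a=3$: forward moves on singletons first, then on pairs, and the reverse for backward moves; the order is flipped for $a=2$ because there the singletons sit below the pairs in the base) which guarantees that parts not yet moved never obstruct a later move. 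To close the gap you must supply these definitions for each $a$ and check that (i) each move preserves admissibility, the bound on the number of $1$'s, and the counts $(m,n)$; (ii) the move counts recorded on successive pairs (resp.\ singletons) are weakly decreasing, so that they genuinely form the partitions counted by $(q^2;q^2)_m^{-1}$ and $(q;q)_n^{-1}$ (or, in your ``$i$-th move adds $2i$'' formulation, that the $m+n$ moves commute and can be applied independently); and (iii) the backward procedure terminates at the base partition and inverts the forward one. None of this is routine; it is the proof.
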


As we will see in the rest of the paper, we need to consider cases $a=3$ and $a=1$ together, and $a=2$ seperately. We explain the reason for this seperation in the following sections.

The layout of the paper is as follows. In Section \ref{sec:examples} we discuss the intuitive idea of our approach and give examples. Section \ref{sec:main} is devoted to the main theorem of the paper and its proof. We finish the paper with possible avenues for further research in Section \ref{sec:Conc}.

\section{Intuitive Idea and Two Key Examples}
\label{sec:examples}
In this section we explain the intuitive idea and give some examples before discussing the details and giving the proof of the main theorem. Our intuitive idea is as follows. Given a partition satisfying $rrg_{3,a}$ conditions, divide the parts into two categories: The parts which appear once(we will call them \emph{singletons}) and the parts that appears twice(we will call them \emph{pairs}). Then, we will find the ``smallest" partition which contains a fixed number of singletons and a fixed number of pairs. (This partition is called the \emph{base partition}). Using certain moves on parts, we start from the base partition and obtain any $rrg_{3,a}$ partition as a result. This will give us a bijection. This bijection is later used to construct a new series for Rogers-Ramanujan-Gordon partitions when $k=3$.
We use square brackects, [ ], to denote pairs and paranthesis, ( ), to denote singletons.
Before discussing the details of the construction, we start with some examples.

\begin{example}
\label{ex:a=3}
Consider the partition $\lambda=14 + 14 + 11 + 10 + 7 + 7 + 5 + 5 + 2  + 1$. Note that this partition satisfies the conditions of $rrg_{3,3}$. We will use this partition to construct a partition triple $(\beta,\mu,\nu)$ where $\beta$ is the base partition, $\mu$ is the partition which contains \textit{backward moves} applied on the pairs(the parts that repeat) of $\lambda$ and $\nu$ is the partition which contains backward moves applied on the singletons(the parts that do not repeat) of~$\lambda$. 

\textbf{Step1:} Firstly, divide the parts into two: The ones which repeats and the ones which do not repeat. Thus, our partition is of the form:
$$[14,14] + (11) + (10) + [7,7] + [5,5] + (2) + (1).$$ Now, we are looking for a partition which contains $m=3$ pairs $n=4$ singletons.

\textbf{Step2:} In this step, we answer the following question: What is the smallest weight partition which contains $3$ pairs and $4$ singletons that satisfy $rrg_{3,3}$ conditions? It can be seen that this is the following partition: $$\beta = (10) + (9) + (8) + (7) + [5,5] + [3,3] + [1,1].$$ Thus, this would be our base partition.

\textbf{Step3:} We want to reach $\beta$ from $\lambda$ using backward moves. Firstly, we will obtain~$[1,1]$ using backward moves: 
\begin{align*}
    [14,14]+(11)+(10)+[7,7]+[5,5]+(2)+(1) &\xrightarrow[]{} \\ [14,14]+(11)+(10)+[7,7]+\textbf{[4,4]} + (2) + (1).
\end{align*}
Now we want to pull $[4,4]$ back and get $[3,3]$. However, the resulting partition, $[14,14]+(11)+(10)+[7,7]+\textbf{[3,3]} + (2) + (1)$, does not satisfy the $rrg_{3,3}$ conditions. Thus, we need to do adjustments on the nearby parts(the details are given in the proof of Theorem \ref{thm:main}).
\begin{align*}
    [14,14]+(11)+(10)+[7,7]+\textbf{[4,4]} + (2) + (1) &\xrightarrow[]{} \\ [14,14] + (11)+ (10)+[7,7]+(4)+(3)+\textbf{[1,1]}.
\end{align*}
We obtained $[1,1]$, as desired. Note that at each backward move on pairs, the weight of the partition decreases by $2$. We can operate inductively, i.e., $[1,1]$ will not interfere with our moves anymore. Now, we want to get $[3,3]$, the next smallest part in $\beta$:
\begin{align*}
    [14,14]+(11)+(10)+[7,7]+(4)+(3)+[1,1] &\xrightarrow[]{} \\ [14,14]+(11)+(10)+\textbf{[6,6]}+(4)+(3)+[1,1] &\xrightarrow[]{} \\ [14,14]+(11)+(10)+(6)+(5)+\textbf{[3,3]} + [1,1].
\end{align*}
Note that a similar adjustment is done in the backward move to get $[3,3]$.
Similarly, we continue with obtaining $[5,5]$:
\begin{align*}
    [14,14]+(11)+(10)+(6)+(5)+[3,3] + [1,1] &\xrightarrow[]{} \\ \textbf{[13,13]} + (11)+(10)+(6)+(5)+[3,3]+[1,1] &\xrightarrow[]{} \\ (13)+(12)+\textbf{[10,10]}+(6)+(5)+[3,3]+[1,1] &\xrightarrow[]{} \\ (13)+(12)+\textbf{[9,9]}+(6)+(5)+[3,3]+[1,1] &\xrightarrow[]{}  \\ (13)+(12)+\textbf{[8,8]}+(6)+(5)+[3,3]+[1,1] &\xrightarrow[]{} \\ (13)+(12)+(8)+(7)+\textbf{[5,5]}+[3,3]+[1,1].
\end{align*}
 We now got the pairs of $\beta$. We continue with the singletons. Note that since in the base partition the pairs comes before the singletons, we do not need to do any adjustment on backward moves on the singletons. Our first aim is to obtain $(7)$. As $(7)$ is already contained in the partition at hand, we do not perform any moves. Same arguments apply for $(8)$. Therefore, we continue with $(9)$:
\begin{align*}
    (13)+(12)+(8)+(7)+[5,5]+[3,3]+[1,1] &\xrightarrow[]{} \\ (13)+\textbf{(11)}+(8)+(7)+[5,5]+[3,3]+[1,1] &\xrightarrow[]{}  \\(13)+\textbf{(10)}+(8)+(7)+[5,5]+[3,3]+[1,1] &\xrightarrow[]{} \\ (13)+\textbf{(9)}+(8)+(7)+[5,5]+[3,3]+[1,1].
\end{align*}
Note that, each backward move on singletons decreases the weight by $1$. Lastly, we need to get $(10)$:
\begin{align*}
    (13)+(9)+(8)+(7)+[5,5]+[3,3]+[1,1] &\xrightarrow[]{} \\ \textbf{(12)}+(9)+(8)+(7)+[5,5]+[3,3]+[1,1] &\xrightarrow[]{} \\ \textbf{(11)}+(9)+(8)+(7)+[5,5]+[3,3]+[1,1] &\xrightarrow[]{} \\ \textbf{(10)}+(9)+(8)+(7)+[5,5]+[3,3]+[1,1].
\end{align*}
This yields the base partition $\beta$. We are left to construct the partitions that contains backward moves applied on the pairs, and the singletons, namely $\mu$ and $\nu$.

\textbf{Step 4: }We applied five backward moves on $[14,14]$, two backward moves on $[7,7]$, and two backward moves on $[5,5]$ where each move decreases the weight by 2. This corresponds to the partition $\mu=10 + 4 +4$. Similarly, we applied three backward moves on $(11)$, three backward moves on $(10)$, and no backward moves on $(2)$ and $(1)$. This corresponds to the partition $\nu=3+3+0+0$. As a small remark, we allow $\mu$ and $\nu$ to contain $0$ as a part. Thus, we have 
 $$14+14+11+10+7+7+5+5+2+1 \xrightarrow[]{}(10+9+8+7+5+5+3+3+1+1 \, , \, 10+4+4 \, , \, 3 + 3+0+0).$$

As a result this is the one direction of the bijection between $$rrg_{3,3} \xrightarrow[]{}(\text{Base Partition  , Backward Moves on Pairs , Backward Moves on Singletons}).$$ 
\end{example}

For the other direction of the same example, i.e., given $\beta = (10)+(9)+(8)+(7)+[5,5]+[3,3]+[1,1]$, $\mu=10+4+4$ and $\nu = 3+3+0+0$, we want to obtain $\lambda = [14,14] +(11)+(10)+[7,7]+[5,5]+2+1$. This time, we will apply \textit{forward moves} on~$\beta$, the base partition, to get $\lambda$.

\textbf{Step1:} This time, we start with the moves on the singletons unlike the other direction where we applied moves on the singletons lastly. Given $\nu = 3+3+0+0$, we need to push~$(10)$ and $(9)$ three times, and we do not push $(8)$ and $(7)$. More explicitly, 
\begin{align*}
    (10)+(9)+(8)+(7)+[5,5]+[3,3]+[1,1] &\xrightarrow[]{} \\ \textbf{(11)}+(9)+(8)+(7)+[5,5]+[3,3]+[1,1] &\xrightarrow[]{} \\ \textbf{(12)}+(9)+(8)+(7)+[5,5]+[3,3]+[1,1] &\xrightarrow[]{} \\  \textbf{(13)}+(9)+(8)+(7)+[5,5]+[3,3]+[1,1] &\xrightarrow[]{} \\(13)+\textbf{(10)}+(8)+(7)+[5,5]+[3,3]+[1,1] &\xrightarrow[]{} \\ (13)+\textbf{(11)}+(8)+(7)+[5,5]+[3,3]+[1,1] &\xrightarrow[]{} \\(13)+\textbf{(12)}+(8)+(7)+[5,5]+[3,3]+[1,1].
\end{align*}
We now got the singletons of $\lambda$.

\textbf{Step2:} We continue with the pairs. Since, $\mu = 10+4+4$, we need to apply five forward moves on $[5,5]$, two forward moves on $[3,3]$ and two forward moves on $[1,1]$. As a result,
\begin{align*}
    (13)+(12)+(8)+(7)+[5,5]+[3,3]+[1,1] &\xrightarrow[]{} \\ (13)+(12) + \textbf{[8,8]} + (6)+(5)+[3,3]+[1,1] &\xrightarrow[]{} \\ (13)+(12) + \textbf{[9,9]} + (6)+(5)+[3,3]+[1,1] &\xrightarrow[]{} \\ (13)+(12) + \textbf{[10,10]} + (6)+(5)+[3,3]+[1,1] &\xrightarrow[]{} \\ \textbf{[13,13]} + (11)+(10)+(6)+(5)+[3,3]+[1,1] &\xrightarrow[]{} \\ \textbf{[14,14]} + (11)+(10)+(6)+(5)+[3,3]+[1,1] &\xrightarrow[]{} \\ [14,14]+(11)+(10)+\textbf{[6,6]}+(4)+(3)+[1,1] &\xrightarrow[]{} \\ [14,14]+(11)+(10)+\textbf{[7,7]}+(4)+(3)+[1,1] &\xrightarrow[]{} \\ [14,14]+(11)+(10)+[7,7] +\textbf{[4,4]}+(2)+(1) &\xrightarrow[]{} \\ [14,14]+(11)+(10)+[7,7] +\textbf{[5,5]}+(2)+(1).
\end{align*}
Thus, we recover $\lambda$. Observe that the forward moves and the backward moves are exactly the opposites of each other in this example. This yields the other direction of the bijection
$${} (\text{Base Partition , Forward Moves On Pairs , Forward Moves On Singletons}) \xrightarrow[]{} rrg_{3,3}. $$ Going back to the example, we have
$$ (10 + 9 + 8 + 7 + 5 + 5 + 3 + 3 + 1 + 1 \, , \, 10 + 4 + 4 \, , \, 3 + 3+0+0) \xrightarrow[]{} 14 + 14 + 11 + 10 + 7 + 7 + 5 + 5 + 2 + 1.$$

We now turn our attention to another example where $a=2$ instead of $a=3$. In this case, the order of the moves and the definition of the moves are different. The main reason is the form of the base partition is different in this case.
\begin{example}
Let $a=2$, instead of $a=3$ as in Example \ref{ex:a=3}, and $\lambda=17 + 13 + 9 + 6 + 6 + 4 + 4 + 1$. 

Firstly, the number of pairs is $2$, i.e., $m=2$ and the number of singletons is $4$, i.e.,~$n=4$. We are looking for the smallest weight partition, the base partition, which satisfies $rrg_{3,2}$ conditions. Moreover, it should contain $2$ pairs and $4$ singletons. So, the base partition is: 
$$\beta=[8,8]+[6,6]+(4)+(3)+(2)+(1).$$ We note here that the uniqueness and the particular form of $\beta$ will become more evident by Lemma \ref{lem:shape}. Our aim is to reach from $\lambda$ to $\beta$ using backward moves.

\textbf{Step1: }Firstly, we want to obtain $(1)$. Since we already have $(1)$ in $\lambda$, there is no need to apply any backward moves. We continue with $(2)$:
    \begin{align*}
        (17)+(13)+(9)+[6,6]+[4,4]+(1) &\xrightarrow[]{} \\ (17)+(13)+\textbf{(8)}+[6,6]+[4,4]+(1) &\xrightarrow[]{} \\ (17)+(13)+[7,7]+[5,5]+\textbf{(3)}+(1) &\xrightarrow[]{} \\ (17)+(13)+[7,7]+[5,5]+\textbf{(2)}+(1).
    \end{align*}
Note that, at the backward move which turns $(8)$ into $(3)$ we need to do some adjustments on the nearby parts. Next, we get $(3)$:
\begin{align*}
    (17)+(13)+[7,7]+[5,5]+(2)+(1) &\xrightarrow[]{} \\ (17)+\textbf{(12)}+[7,7]+[5,5]+(2)+(1) &\xrightarrow[]{} \\(17)+\textbf{(11)}+[7,7]+[5,5]+(2)+(1) &\xrightarrow[]{} \\{} (17)+\textbf{(10)}+[7,7]+[5,5]+(2)+(1) &\xrightarrow[]{} \\(17)+\textbf{(9)}+[7,7]+[5,5]+(2)+(1) &\xrightarrow[]{} \\ (17)+[8,8]+[6,6]+\textbf{(4)}+(2)+(1) &\xrightarrow[]{} \\ (17)+[8,8]+[6,6]+\textbf{(3)}+(2)+(1).
\end{align*}

Lastly, we obtain $(4)$:
    \begin{align*}
        (17)+[8,8]+[6,6]+(3)+(2)+(1) &\xrightarrow[]{} \\ \textbf{(16)}+[8,8]+[6,6]+(3)+(2)+(1) &\xrightarrow[]{} \\ \textbf{(15)}+[8,8]+[6,6]+(3)+(2)+(1) &\xrightarrow[]{} \\ \textbf{(14)}+[8,8]+[6,6]+(3)+(2)+(1) &\xrightarrow[]{} \\ \textbf{(13)}+[8,8]+[6,6]+(3)+(2)+(1) &\xrightarrow[]{} \\ \textbf{(12)}+[8,8]+[6,6]+(3)+(2)+(1) &\xrightarrow[]{} \\ \textbf{(11)}+[8,8]+[6,6]+(3)+(2)+(1) &\xrightarrow[]{} \\ \textbf{(10)}+[8,8]+[6,6]+(3)+(2)+(1) &\xrightarrow[]{} \\ [9,9]+[7,7]+\textbf{(5)}+(3)+(2)+(1) &\xrightarrow[]{} \\ [9,9]+[7,7]+\textbf{(4)}+(3)+(2)+(1).
    \end{align*}
Now, all singletons in the base partition $\beta$ are obtained. Note that each backward move on the singletons decreases the weight of the partition by $1$. 

\textbf{Step2: }We continue with the pairs. First, we obtain $[6,6]$:
    \begin{align*}
        [9,9]+[7,7]+(4)+(3)+(2)+(1) \xrightarrow[]{} [9,9]+\textbf{[6,6]}+(4)+(3)+(2)+(1).
    \end{align*}
To conclude, we need to obtain $[8,8]$:
    $$[9,9]+[6,6]+(4)+(3)+(2)+(1) \xrightarrow[]{} \textbf{[8,8]}+[6,6]+(4)+(3)+(2)+(1).$$
We arrived at $\beta$. Note that each backward move on pairs decreases the weight by $2$. 

\textbf{Step3: }We are left to construct the partitions $\mu$ and $\nu$. Counting the total number of backward moves: nine backward moves on $(17)$, six backward moves on $(13)$, three backward moves on $(9)$, no backward moves on $(1)$, one backward move on $[6,6]$, and one backward move on $[4,4]$ leads to $\nu=(9,6,3,0)$ and $\mu=(2,2)$. As a result, what we have established is the correspondence:
    $$17 + 13 + 9 + 6 + 6 + 4 + 4 + 1 \xrightarrow[]{} (8+8+6+6+4+3+2+1 \, , \, 9+6+3+0 \, , \, 2+2).  $$
    For the other direction, we start from the triple partition
    $$(\beta,\mu,\nu) = ([8,8]+[6,6]+(4)+(3)+(2)+(1),\ 2+2,\ 9+6+3+0).$$
    We want to obtain $\lambda = 17 + 13 + 9 + 6 + 6 + 4 + 4 + 1$ using forward moves on $\beta$. 

\textbf{Step1: }Similar to Example \ref{ex:a=3}, we will reverse the order of the moves we applied. More precisely, we start with forward moves on pairs. Since $\mu = 2+2$, this means that we need to apply one forward move on $[8,8]$ and one forward move on $[6,6]$.
    \begin{align*}
        [8,8]+[6,6]+(4)+(3)+(2)+(1) &\xrightarrow[]{} \\ \textbf{[9,9]}+[6,6]+(4)+(3)+(2)+(1) &\xrightarrow[]{} \\ [9,9] + \textbf{[7,7]}+(4)+(3)+(2)+(1).
    \end{align*}
This concludes the part with the pairs. 

\textbf{Step2: }Next, we continue with the singletons. Firstly, we apply nine forward moves on $(4)$:
    \begin{align*}
        [9,9]+[7,7]+(4)+(3)+(2)+(1) &\xrightarrow[]{} \\ [9,9]+[7,7]+\textbf{(5)}+(3)+(2)+(1) &\xrightarrow[]{} \\ \textbf{(10)}+[8,8] + [6,6]+(3)+(2)+(1) &\xrightarrow[]{} \\ \textbf{(11)}+[8,8] + [6,6]+(3)+(2)+(1) &\xrightarrow[]{} \\ \textbf{(12)}+[8,8] + [6,6]+(3)+(2)+(1) &\xrightarrow[]{} \\ \textbf{(13)}+[8,8] + [6,6]+(3)+(2)+(1) &\xrightarrow[]{} \\ \textbf{(14)}+[8,8] + [6,6]+(3)+(2)+(1) &\xrightarrow[]{} \\ \textbf{(15)}+[8,8] + [6,6]+(3)+(2)+(1) &\xrightarrow[]{} \\ \textbf{(16)}+[8,8] + [6,6]+(3)+(2)+(1) &\xrightarrow[]{} \\ \textbf{(17)}+[8,8] + [6,6]+(3)+(2)+(1).
    \end{align*}
Similarly, we apply six moves on $(3)$:
    \begin{align*}
        (17)+[8,8]+[6,6]+(3)+(2)+(1) &\xrightarrow[]{} \\ (17)+[8,8]+[6,6]+\textbf{(4)}+(2)+(1) &\xrightarrow[]{} \\(17)+\textbf{(9)}+[7,7]+[5,5]+(2)+(1) &\xrightarrow[]{} \\(17)+\textbf{(10)}+[7,7]+[5,5]+(2)+(1) &\xrightarrow[]{} \\(17)+ \textbf{(11)}+[7,7]+[5,5]+(2)+(1) &\xrightarrow[]{} \\ (17)+\textbf{(12)}+[7,7]+[5,5]+(2)+(1) &\xrightarrow[]{} \\(17)+\textbf{(13)}+[7,7]+[5,5]+(2)+(1).
    \end{align*}
Lastly, we need to apply three forward moves on $(2)$ in order to get $\lambda$ back:
    \begin{align*}
        (17)+(13)+[7,7]+[5,5]+(2)+(1) &\xrightarrow[]{} \\ (17)+(13)+[7,7]+[5,5]+\textbf{(3)}+(1) &\xrightarrow[]{} \\ (17)+(13)+\textbf{(8)}+[6,6]+[4,4]+(1) &\xrightarrow[]{} \\ (17)+(13)+\textbf{(9)}+[6,6]+[4,4]+(1).
    \end{align*} 
Hence, we got the correspondence:
    $$ (8+8+6+6+4+3+2+1 \, , \, 9+6+3+0 \, , \, 2+2) \xrightarrow[]{} 17 + 13 + 9 + 6 + 6 + 4 + 4 + 1,$$
as desired. In the next section, we state the main theorem of the paper and give the proof that the above prodecures always work.

\end{example}

\section{Main Theorem}\label{sec:main}

Throughout this section, we take $a=3$ (recall that the number of 1's
as a part in $rrg_{k,a}$ partitions cannot exceed $a-1$), and we formally define the ``forward moves'' and ``backward moves". Then, we prove that we get the bijection introduced in Section \ref{sec:examples} using these moves. This allows us to write a new series for $rrg_{3,a}$ partitions. Moreover, at the end of the section we explain what happens if $a=2$ or $a=1$. We directly state our main theorem:
\begin{theorem}
\label{thm:main}
Let $rrg_{3,3}(m,n)$ denote the number of partitions of $n$ with exactly $m$ parts which satisfies the $rrg_{3,3}$ conditions. Then,

$$T_3(x) = \sum_{m,n \ge 0} rrg_{3,3}(m,n) x^mq^n = \sum_{m,n \ge 0} \frac{q^{4\binom{m+1}{2}+2mn+\binom{n+1}{2}-2m} x^{2m+n}} {(q^2;q^2)_m(q;q)_n} $$
\end{theorem}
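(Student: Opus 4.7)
The plan is to realize the bijection sketched in Section \ref{sec:examples} rigorously, and then read off the generating function from it. The first step is to identify, for fixed pair count $m$ and singleton count $n$, the base partition
\[
\beta_{m,n} \;=\; (2m+n) + (2m+n-1) + \cdots + (2m+1) + [2m-1,2m-1] + [2m-3,2m-3] + \cdots + [1,1],
\]
and to verify that it is the unique minimum-weight $rrg_{3,3}$ partition with these invariants. A short greedy argument suffices: any attempt to lower a part of $\beta_{m,n}$ either violates the difference condition $\lambda_i \ge \lambda_{i+2}+2$, or (for $m\ge 1$) requires more than two $1$'s. Summing the parts gives the weight
\[
2\sum_{i=1}^m (2i-1) + \sum_{j=1}^n (2m+j) \;=\; 2m^2 + 2mn + \binom{n+1}{2} \;=\; 4\binom{m+1}{2} + 2mn + \binom{n+1}{2} - 2m,
\]
while the number of parts is $2m+n$. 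These account exactly for the factors $x^{2m+n}$ and $q^{4\binom{m+1}{2}+2mn+\binom{n+1}{2}-2m}$ on the right-hand side of Theorem \ref{thm:main}.

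The second step is to formalize the moves. A forward move on a pair $[b,b]$ is the replacement $[b,b]\mapsto[b+1,b+1]$ whenever the $rrg_{3,3}$ difference condition remains intact (the \emph{simple} case). When the move is blocked by the two singletons $(b+3),(b+2)$ immediately above $[b,b]$, we use the \emph{cascading} rewrite $(b+3)+(b+2)+[b,b]\mapsto [b+3,b+3]+(b+1)+(b)$ exhibited in Example \ref{ex:a=3}. An analogous cascade is defined for forward singleton moves when a tight configuration of pairs above blocks the simple increment. Backward moves are declared to be the exact inverses of the forward ones. A direct case analysis then shows every move preserves the $rrg_{3,3}$ conditions together with the invariants $m$ and $n$, that each pair move changes the total weight by $2$, and that each singleton move changes it by $1$.

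The third step is the bijection itself. Starting from $\beta_{m,n}$, we apply forward moves in the canonical order of Section \ref{sec:examples}: first the singletons from the top down, then the pairs from the top down. The resulting trajectory is encoded by two partitions, $\mu=(\mu_1\ge\mu_2\ge\cdots\ge\mu_m\ge 0)$ with $\mu_i/2$ the number of moves applied to the $i$-th pair from the top, and $\nu=(\nu_1\ge\nu_2\ge\cdots\ge\nu_n\ge 0)$ with $\nu_j$ the number of moves applied to the $j$-th singleton from the top. The decisive structural claim—and the step I expect to be the main obstacle—is that this assignment does yield non-increasing sequences: applying more moves to a lower pair than to the one above it would force those two pairs into violation of $\lambda_i\ge\lambda_{i+2}+2$, and symmetrically for singletons. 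Once this is established, running the backward moves in the reverse canonical order from an arbitrary $\lambda\in rrg_{3,3}$ with invariants $(m,n)$ recovers $(\mu,\nu)$ uniquely, so the correspondence $\lambda\leftrightarrow(\beta_{m,n},\mu,\nu)$ is a genuine bijection.

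Given the bijection, assembling the generating function is routine: $\beta_{m,n}$ contributes $x^{2m+n}q^{4\binom{m+1}{2}+2mn+\binom{n+1}{2}-2m}$, the partitions $\mu$ with at most $m$ non-negative even parts contribute $1/(q^2;q^2)_m$, the partitions $\nu$ with at most $n$ non-negative parts contribute $1/(q;q)_n$, and summing over $m,n\ge 0$ yields the series stated for $T_3(x)$. The main technical load in the formal write-up will be the case analysis verifying that each cascading move interacts correctly with its neighboring parts—both to maintain $rrg_{3,3}$-validity and to enforce the partition shape of $\mu$ and $\nu$—so that the map is well-defined and not merely surjective.
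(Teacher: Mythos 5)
Your proposal follows essentially the same route as the paper: identify the unique minimum-weight base partition $\beta_{m,n}$ (the paper's Lemma \ref{lem:shape}), define simple and cascading forward moves on pairs and singletons with backward moves as their inverses, encode the move counts in partitions $\mu$ (even parts, at most $m$ of them) and $\nu$ (at most $n$ parts), and read off the series from $|\beta_{m,n}|=4\binom{m+1}{2}+2mn+\binom{n+1}{2}-2m$ together with the factors $1/(q^2;q^2)_m$ and $1/(q;q)_n$. If anything, you are more explicit than the paper about the one point it leaves implicit, namely that the recorded move counts really form non-increasing sequences and that the forward and backward procedures are mutually inverse.
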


To prove this theorem, we need a lemma which shows that the base partition is really what is claimed, i.e., it is of the form $(2m+n)+\ldots+(2m+3)+(2m+1)+[2m-1,2m-1]+\ldots+[3,3]+[1,1]$.

\begin{lemma}
\label{lem:shape}
    The base partition for $rrg_{3,3}$ with $m$ pairs and $n$ singletons, i.e., the partition which satisfies the $rrg_{3,3}$ conditions with $m$ pairs $n$ singletons and has the smallest weight is the following unique partition: $$(2m+n) + (2m+n - 1)+ \ldots + (2m+1) + [2m-1 , 2m-1] + [2m-3,2m-3] + \ldots +[3,3] + [1 , 1].$$
\end{lemma}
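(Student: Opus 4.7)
The plan is to first verify that the candidate partition $\beta_{m,n}$ claimed in the lemma is itself a valid $rrg_{3,3}$ partition with the right structure, and then prove minimality by induction. The verification is immediate: consecutive singletons differ by exactly $1$, the gap $(2m+1)-(2m-1)=2$ at the singleton/pair interface meets the $rrg_{3,3}$ gap condition, consecutive pair values also differ by $2$, and the number of $1$'s is at most $2=a-1$. Summing yields weight $2m^2 + 2mn + \binom{n+1}{2}$, which is the lower bound we must prove.

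For minimality and uniqueness I would set up a \emph{simultaneous} strong induction on the total number of parts $s=2m+n$ of two statements: (A) every $rrg_{3,3}$ partition with $m$ pairs and $n$ singletons has weight at least $2m^2 + 2mn + \binom{n+1}{2}$, with equality only for $\beta_{m,n}$; and (B) if in addition $n\ge 1$ and the smallest part is a singleton equal to $1$, then the weight is at least $2m^2 + 2mn + \binom{n+1}{2} + 2m$. The base cases $s\le 2$ are direct. In the inductive step for (A), a preliminary normalisation (subtract $1$ from every part whenever the smallest is $\ge 2$: the gap condition is translation-invariant and the count of $1$'s only decreases) lets us assume the smallest part equals~$1$. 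If the two smallest parts form a pair $[1,1]$ (so $m\ge 1$), I remove the pair, shift the remainder down by~$2$ to get a valid $rrg_{3,3}$ partition $\lambda^\flat$ with $(m-1,n)$ structure, and apply IH~(A) to $\lambda^\flat$; unwinding gives weight $\lambda \ge 2m^2 + 2mn + \binom{n+1}{2}$, with equality iff $\lambda^\flat = \beta_{m-1,n}$ and hence $\lambda = \beta_{m,n}$. If instead the smallest part is a singleton $(1)$ (so $n\ge 1$), statement (B) supplies the stronger bound $\mathrm{weight}(\lambda)\ge 2m^2 + 2mn + \binom{n+1}{2} + 2m$, which for $m\ge 1$ exceeds the base weight (ruling out this case) and for $m=0$ reduces to exactly the bound needed.

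To prove (B) I would remove the bottom $(1)$ to get $\lambda^\sharp$ with $(m,n-1)$ structure and all parts $\ge 2$; since $\lambda_{s-2}\ge \lambda_s+2=3$, the smallest part $v$ of $\lambda^\sharp$ is either exactly $2$ (necessarily a singleton) or $\ge 3$. Shifting $\lambda^\sharp$ down by $v-1$ yields a valid $(m,n-1)$-partition $\lambda^\flat$, to which I apply either IH~(B) (when $\lambda^\flat$'s bottom is again a singleton $(1)$) or IH~(A) (when it is a pair $[1,1]$); in each sub-case a one-line arithmetic check confirms the $+2m$ slack, with the slack coming either from the factor $(v-1)\ge 2$ picked up by the shift or from the inductive $+2m$ in (B). Uniqueness in (A) follows by tracing the equality cases back through the reduction.

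The main obstacle, and the whole reason for bundling (B) with (A), is the bottom-is-$(1)$-singleton case of (A): a naive one-statement induction would remove the singleton, apply IH, and then add it back, but the residual partition is \emph{forced} to have its own bottom be a singleton, which for $m\ge 1$ is never the base shape $\beta_{m,n-1}$; thus the residual weight is strictly greater than IH~(A) alone would give. Quantifying exactly how much greater---as the uniform $+2m$ slack in (B)---is the insight that lets the induction close and simultaneously delivers the uniqueness claim.
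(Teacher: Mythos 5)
Your argument is correct, and it takes a genuinely different route from the paper's. The paper proves the lemma locally: it observes that no backward move can be applied to the claimed partition $\beta_{m,n}$ --- pulling the smallest singleton $(2m+1)$ back is blocked by the chain of pairs (it would force a configuration ending in $[2,2]+(1)$, violating the difference condition), and the pairs already sit at $[1,1],[3,3],\dots$ with minimal gaps --- and concludes minimality and uniqueness from the fact that every consecutive gap is already as small as the $rrg_{3,3}$ conditions allow. That argument is short but implicitly relies on the move machinery developed only later in the proof of the main theorem, and on the unstated principle that local non-improvability under the defined moves implies global minimality. You instead give a self-contained strong induction on the number of parts $s=2m+n$, and the key idea --- strengthening the induction with the auxiliary statement (B) that a bottom singleton $(1)$ forces an extra $+2m$ of weight --- is exactly what is needed to make a one-statement induction close; your reductions (strip $[1,1]$ and shift by $2$, or strip $(1)$ and shift by $v-1$) check out arithmetically in all sub-cases, and equality tracing does yield uniqueness. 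What your approach buys is a rigorous, move-free proof of the lower bound; what the paper's buys is brevity and consistency with the forward/backward-move narrative of the rest of the section. One small justification slip worth fixing: in your normalisation step the count of $1$'s does not ``only decrease'' --- it can jump from $0$ to $2$ when the smallest part is a repeated $2$ --- but validity is still preserved because the condition $\lambda_i\ge\lambda_{i+2}+2$ forbids any part from having multiplicity three, so the shifted partition has at most $2=a-1$ ones.
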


\begin{proof}    
We need to prove the followings:

\begin{enumerate}
    \item The form of the base partition is as claimed,
    \item The claimed partition is unique.
\end{enumerate}

If it is not the smallest weight partition, then there must exist a backward move on one of the parts so that the weight can be decreased. Either we can move a singleton or a pair backward. Next, we show that neither a backward singleton move nor a backward pair move is possible.

Suppose a backward move on a singleton is possible. If we want to move a singleton backward, we need to move $(2m+1)$. This is not allowed due to the presence of the pairs~$[2m-1,2m-1],\ [2m-3,2m-3],\ \ldots,\ [1,1]$ as this would lead to a partition of the form:

$$ \text{Larger Parts} +  (2m+2) + [2m,2m]+ [2m-2 , 2m-2] + \ldots +  [2,2] + (1).$$ However, $[2,2]+(1)$ violates the difference conditions of $rrg_{3,3}$. Thus, the backward move on singletons is not a possibility.

    Similarly, we cannot apply a backward move on pairs since the pairs already start from the smallest number, $[1,1]$, and the difference between consecutive pairs is the smallest as well. Thus, a backward move on pairs is not possible as well. As a result, this must be the smallest weight partition. Moreover, the form is unique, since the each consecutive difference is the smallest possible that satisfy $rrg_{3,3}$ conditions.
      
\end{proof}

\begin{proof}[Proof of the main theorem]
The proof is done in three steps. First, we will show that if forward and backward moves are defined properly, we get the claimed series. Second, the proper definition of forward moves are given. Third, we give the proper definition of backward moves.

\textbf{Step1: }We show that each partition, $\lambda$ , counted by $rrg_{3,3}(m,n)$ corresponds to a triple of partitions $(\beta , \mu , \nu) $ where $\beta$ is the corresponding base partition, $\mu$ is the partition which keeps the forward moves on pairs and $\nu$ is the partition which keeps the forward moves on singletons. Let $n_1$ be the number of pairs in $\lambda$ and $n_2$ be the number of singletons in $\lambda$. Then, by Lemma \ref{lem:shape} we have $$\beta = (2n_1+n_2-1) + \ldots + (2n_1+1) + (2n_1)  + [2n_1 -1 , 2n_1 - 1]+ \ldots + [3,3] + [1,1].$$ Note that, $\beta$ has $2n_1+n_2$ parts and $$|\beta| = 4\binom{n_1+1}{2}+2n_1n_2+\binom{n_2+1}{2} - 2n_1.$$  Since there are $n_1$ pairs and $n_2$ singletons in $\lambda$, the partition $\mu$ has $n_1$ even parts(because each forward move on pairs increase the weight of the partition by 2) and $\nu$ has $n_2$ parts. Hence, we obtain

\begin{align*}
    \sum_{\substack{n_1,n_2 \ge 0 \\ \mu , \nu \, \text{are partitions as above}}}q^{|\beta| + |\mu| + |\nu|}x^{2n_1+n_2} = \frac{q^{4\binom{n_1+1}{2}+2n_1n_2+\binom{n_2+1}{2} - 2n_1}x^{2n_1+n_2}}{(q^2;q^2)_{n_1}(q;q)_{n_2}}.
\end{align*}

Using $(\beta,\mu,\nu)$ we will get a unique partition $\lambda$ via forward moves, and given $\lambda$ we will get a unique $(\beta,\mu,\nu)$ via backward moves. As a result, we will prove that 
\begin{align*}
    \sum_{n,m \ge 0}rrg_{3,3}(m,n)x^m q^n = \frac{q^{4\binom{n_1+1}{2}+2n_1n_2+\binom{n_2+1}{2} - 2n_1}x^{2n_1+n_2}}{(q^2;q^2)_{n_1}(q;q)_{n_2}}
\end{align*}

\textbf{Step2: } We now investigate the forward moves. Given $(\beta , \mu , \nu)$, we want to push the parts of the base partition $\beta$ to get $\lambda$, a $rrg_{3,3}$ partition. We will apply the forward moves on the singletons. More precisely, we push the largest singleton in the base partition $\nu_1$ times, the second largest singleton $\nu_2$ times, $\ldots$, and the smallest singleton $\nu_{n_2}$ times. We then apply the forward moves on the pairs. Again, we push the largest pair $\mu_1$ times, the second largest pair $\mu_2$ times, $\ldots$, and the smallest pair $\mu_{n_1}$ times. The details of the moves are given as follows.
\begin{enumerate}
        \item Forward moves on singletons: Since in the base partition the pairs comes before the singletons, the forward moves on singletons are straightforward. Pushing singleton $(a)$ gives us $(a+1)$ which increases the weight of the partition by $1$.
        \item Forward moves on pairs: If there is a pair $[b,b]$ we want to push, there are two cases to consider:
        
        \begin{enumerate}
            \item The pair becomes $[b+1,b+1]$ and the resulting partition satisfies $rrg_{3,3}$ conditions. In this case, the move is defined as: $[b,b] \xrightarrow[]{} \textbf{[b+1,b+1]}$.
            \item  We cannot make it $[b+1,b+1]$ because it violates the conditions of $rrg_{3,3}$. The reason is the existence of the singleton $(b+2)$, and the possible existence of other singletons $(b+3),(b+4),\ldots,(b+s)$ for some integer $s\ge 2$. Then, the forward move on the pair is defined as
            \begin{align*}
                (b+s) + (b+s-1) + ... + (b+3)+(b+2) + [b,b] \xrightarrow[]{} \\ \textbf{[b+s,b+s]} + (b+s-2) + ... +(b+1)+(b).
            \end{align*}  
        \end{enumerate}
    \end{enumerate}
As a result, given $(\beta,\mu,\nu)$ we get a unique $\lambda$ which satisfies $rrg_{3,3}$ conditions.

\textbf{Step3: }Lastly, we need to investigate the backward moves. Given a $\lambda$ which satisfies~$rrg_{3,3}$ conditions, first we look at the number of pairs and singletons in $\lambda$. This allows us to find the corresponding base partition, $\beta$. Our aim is to pull parts of $\lambda$ and get $\beta$. We start with the smallest pair in $\lambda$ and apply backward moves on it until it becomes $[1,1]$ and continue with the next smallest pair of $\lambda$ and apply backward moves on it until it becomes $[3,3]$, and so on. Once we obtain all the pairs of the base partition $\beta$, we continue with the singletons, again from the smallest singleton to the largest singleton. In other words, we will apply backward moves on the smallest singleton in $\lambda$ until we obtain the smallest singleton in $\beta$, and so on. More precisely, the backward moves are defined as:
 \begin{enumerate}
        \item For the backward moves on pairs, there are two cases to consider: 
        \begin{enumerate}
            \item  A backward move on $[b+1,b+1]$ is defined as : $[b+1,b+1] \xrightarrow[]{} \textbf{[b,b]}$, if it does not violate the $rrg_{3,3}$ conditions. 
            \item  We cannot turn $[b+1,b+1]$ into $[b,b]$, i.e., there is a singleton $(b-1)$ and possibly $(b-2), (b-3) ,\ldots, (b-s)$, where $s$ in an integer $2 \le s < b$. In this case, we define the backward move as
            \begin{align*}
            [b+1,b+1] + (b-1)+\ldots + (b-s+1) + (b-s) \xrightarrow[]{} \\ (b+1)+(b)+\ldots+(b-s+2) + \textbf{[b-s,b-s]}.   
            \end{align*}
            Note that in either case the weight of the partition decreases by $2$.
        \end{enumerate}
        \item Backward moves on singletons: since we first pull the pairs back, the backward moves on singletons are straightforward: $(b+1)$ becomes $(b)$. Note that the weight of the partition decreases by $1$.
        \end{enumerate}
        
        Observe that, the backward moves are exactly the opposite of the forward moves.
        For each pair, we keep track of the backward moves we applied to that pair. We put this into~$\mu$. Similarly, each backward moves on singletons are stored in $\nu$. More precisely,~$\mu$ would be a vector of length $n_1$ where $\mu_i$ is the number of backward moves applied to the~$i$-th largest pair in $\lambda$. Similarly, $\nu$ would be a vector of length $n_2$ where $\nu_i$ is the number of backward moves applied to the $i$th largest singleton in $\lambda$. As a result, given $\lambda$ satisfying $rrg_{3,3}$ conditions, we get a unique $(\beta , \mu, \nu)$. This concludes the proof. \qedhere

\end{proof}

In Theorem \ref{thm:main}, we only considered the case $a=3$. It is possible to take $a=1$ or~$a=2$, as well. We discuss the details in the following remarks.

\begin{remark}
    The proof of Theorem \ref{thm:main} works for $a=1$ as well, the only difference is that the corresponding base partition is $$\beta = (2n_1+n_2) + \ldots + (2n_1+2) + (2n_1+1)  + [2n_1 , 2n_1]+\ldots + [4,4] + [2,2]$$ instead of $(2n_1+n_2-1) + \ldots + (2n_1+1) + (2n_1)  + [2n_1 -1 , 2n_1 - 1] +\ldots + [3,3] + [1,1]$. This indeed is the only difference since the form of the base partition is exactly same as in $a=3$ case. The corresponding generating function is $$T_1(x) = \sum_{n,m \ge 0} \frac{q^{4\binom{m+1}{2}+2mn+\binom{n+1}{2}+n}x^{2m+n}}{(q^2;q^2)_m(q;q)_n}$$
\end{remark}

\begin{remark}
The proof of Theorem \ref{thm:main} works for $a=2$ if the definition of the moves are changed as well as the base partition:
    \begin{enumerate}
        \item Our base partition has to change since we cannot use $[1,1]$ anymore. As a result, our base partition would be the following:
        $$ [n + 2m , n + 2m] + [n + 2m - 2 , n+2m - 2] + \ldots + [n+2 , n+2] + (n) + (n - 1)  + \ldots + (2) + (1).$$ 

        Since, now, in the base partition the singletons comes before the pairs, we need to redefine the moves as well.
        \item We look at the forward moves first: 
        \begin{enumerate}
            \item Forward moves on pairs: since pairs comes after the singletons in the base partition, the move is defined as $[b,b] \xrightarrow[]{} [b+1 , b+1]$. Note that, the weight of the partition increases by $2$.
            \item Forward moves on singletons: we have two cases to consider.
            \begin{enumerate}
                \item If one can push them, without violating the conditions, the move is defined as $b \xrightarrow[]{} b+1$. The weight of the partition increases by $1$.
                \item  Otherwise, there are some pairs which prevents our move, namely we are in the situation $ [b+2s , b+2s] + \ldots + [b+4 , b+4] + [b+2 , b+2] + (b)$. Then, the forward move on $(b)$ is
                \begin{align*}
                [b+2s , b+2s] + \ldots + [b+4 , b+4] + [b+2 , b+2] + (b) \xrightarrow[]{} \\
                \textbf{(b+2s+1)} + [b+2s-1 , b+2s-1] + \ldots +[b+3,b+3] , [b+1, b+1].
                \end{align*}
         Note that the weight of the partition increases by $1$.
            \end{enumerate}
            \item We now look at the details of backward moves.

        \begin{enumerate}
                \item Backward moves on pairs: since there cannot be any restrictions, $[b+1,b+1]$ becomes $[b,b]$. Note that the weight of the partition decreases by $2$.
                \item Backward moves on singletons: 
                \begin{enumerate}
                    \item If one can pull them without violating the conditions, then the backward move is $(b+1) \xrightarrow[]{}(b)$.
                    \item However, it can be the case that there are some restrictions due to nearby pairs: $(b+1) + [b-1,b-1] + [b-3,b-3] + [b-2s+1 , b-2s+1]$ for some integer $1 \le s < b/2$, then the backward move is defined as
                    \begin{align*}
                    (b+1) + [b-1,b-1] + [b-3,b-3]+ \ldots + [b-2s+1 , b-2s+1] \xrightarrow[]{} \\
                    [b,b]+[b-2,b-2]+\ldots+[b-2s+2,b-2s+2] + \textbf{(b - 2s)}\end{align*}
                    \end{enumerate} 
            \end{enumerate}
             
        \end{enumerate}
        \item Unlike the $a=3$ case, we first apply forward moves on pairs, and then on singletons. On the other hand, we apply backward moves on singletons first, and then on the pairs.
        \item As a result, we get the generating function $$T_2(x) = \sum_{n,m \ge 0} \frac{q^{4\binom{m+1}{2}+2mn+\binom{n+1}{2}}x^{2m+n}}{(q^2;q^2)_m(q;q)_n}.$$
    \end{enumerate}

\end{remark}

\section{Conclusions, Discussion and Future Research Directions} \label{sec:Conc}

In this paper, we consider a new series for $rrg_3$ partitions and give a combinatorial explanation using base partition and moves idea.

We conclude this paper by mentioning three research directions that naturally arise from our work:

\begin{enumerate}
    \item Generalizing our approach to cases when $k >3$. We have done preliminary work on the case $k=4$, however generalizing our approach seems to be a more challenging task since one has triples as well as singletons and pairs. As a general observation, once number of types of parts increases, defining the moves becomes more and more difficult.
   
    \item The best case scenario is to generalize our series to an arbitrary $k$ and find series of the form $$\sum_{m,n \ge 0} rrg_k(m,n)x^mq^n = \sum_{n_1,n_2,n_3,\ldots,n_{k - 1} \ge 0}\frac{q^{\text{QUADRATIC}}x^{\text{LINEAR}}}{(q;q)_{n_1}(q^2;q^2)_{n_2}\ldots(q^{k-1};q^{k-1})_{n_{k-1}}}.$$ Is it possible to combinatorially interpret this series using base partition and moves, as we discussed in this paper?
    \item The ultimate goal is: given a series of the form $$\sum_{n_1,n_2,n_3,\ldots,n_{k - 1} \ge 0}\frac{q^{\text{QUADRATIC}}x^{\text{LINEAR}}}{(q^{\alpha_1};q^{\alpha_1})_{n_1}(q^{\alpha_2};q^{\alpha_2})_{n_2}\ldots(q^{\alpha_{k-1}};q^{\alpha_{k-1}})_{n_{k-1}}}$$ is it possible to \textbf{automatically} interpret it combinatorially?  We emphasize ``automatically" since the hardest part of this approach is to define part types(singletons and pairs in our case) and the moves.  Thus, it would be very beneficial to have a combinatorial framework which works for all such series. 
\end{enumerate}
As a final remark, it should be noted that it is possible to construct our series via adding and removing staircase.
\section*{Acknowledgements}

We thank Kağan Kurşungöz for useful discussions.

\bigskip
\bibliographystyle{abbrv}
\bibliography{direkt}

\begin{thebibliography}{10}

\bibitem{andrews1974analytic}
G.~E. Andrews.
\newblock An analytic generalization of the rogers-ramanujan identities for odd
  moduli.
\newblock {\em Proceedings of the National Academy of Sciences},
  71(10):4082--4085, 1974.

\bibitem{andrews1974general}
G.~E. Andrews.
\newblock A general theory of identities of the rogers-ramanujan type.
\newblock {\em Bulletin of the American Mathematical Society},
  80(6):1033--1052, 1974.

\bibitem{andrews1998theory}
G.~E. Andrews.
\newblock {\em The theory of partitions}.
\newblock Cambridge university press, 1998.

\bibitem{bressoud1980analytic}
D.~M. Bressoud.
\newblock Analytic and combinatorial generalizations of the rogers-ramanujan
  identities.
\newblock {\em Memoirs of the American Mathematical Society}, 24(227), 1980.

\bibitem{chern2019linked}
S.~Chern.
\newblock Linked partition ideals, directed graphs and q-multi-summations.
\newblock {\em The Electronic Journal of Combinatorics}, 27(3):3--33, 2020.

\bibitem{chern2020linked}
S.~Chern and Z.~Li.
\newblock Linked partition ideals and kanade--russell conjectures.
\newblock {\em Discrete Mathematics}, 343(7):111876, 2020.

\bibitem{euler2012introduction}
L.~Euler.
\newblock {\em Introduction to analysis of the infinite: Book I}.
\newblock Springer Science \& Business Media, 2012.

\bibitem{GORDON61}
B.~Gordon.
\newblock A combinatorial generalization of the rogers-ramanujan identities.
\newblock {\em American Journal of Mathematics}, 1961.

\bibitem{kurcsungoz2010parity}
K.~Kur{\c{s}}ung{\"o}z.
\newblock Parity considerations in andrews--gordon identities.
\newblock {\em European Journal of Combinatorics}, 31(3):976--1000, 2010.

\bibitem{kurcsungoz2019andrews}
K.~Kur{\c{s}}ung{\"o}z.
\newblock Andrews--gordon type series for capparelli's and g{\"o}llnitz--gordon
  identities.
\newblock {\em Journal of Combinatorial Theory, Series A}, 165:117--138, 2019.

\end{thebibliography}

\end{document}